\documentclass[11pt]{article}

\usepackage[T1]{fontenc}
\usepackage[utf8]{inputenc}
\usepackage{lmodern}
\usepackage{microtype}
\usepackage{amsmath,amssymb,amsthm,mathtools,url}
\usepackage{enumitem}
\usepackage{booktabs}
\usepackage{longtable}
\usepackage{geometry}
\usepackage[colorlinks=true,linkcolor=blue,citecolor=blue,urlcolor=blue]{hyperref}
\usepackage[normalem]{ulem}
\newtheorem{theorem}{Theorem}[section]
\newtheorem{proposition}[theorem]{Proposition}
\newtheorem{lemma}[theorem]{Lemma}

\theoremstyle{remark}
\newtheorem{remark}[theorem]{Remark}

\newcommand{\PP}{\mathbb P}
\newcommand{\C}{\mathbb C}

\newcommand{\Z}{\mathbb Z}
\newcommand{\A}{\mathcal A}
\newcommand{\D}{\Delta}

\title{A counterexample to Purdy's inequality for hyperplane arrangements in projective three-space}
\author{Mateusz Michałek and Piotr Pokora}
\date{\today}

\begin{document}

\maketitle

\begin{abstract}
We record an explicit counterexample to a refined form of Purdy's inequality for
essential hyperplane arrangements in projective three-space.  Let \(\A\) be an
arrangement of \(n\) hyperplanes in \(\PP^3_{\C}\).  Let \(\ell\) be the number
of distinct intersection lines of \(\A\), and let \(p\) be the number of
intersection points, where an intersection point means a point at which at least
three hyperplanes meet.  The expected inequality is
\[
        p-\ell+n+2\geq 0.
\]
The classical obstruction is the rank \(2+2\) product arrangement, or dually a
configuration of points contained in two skew lines.  We explain this obstruction
first, and then show that it is not the only one.  The reflection-arrangement
search leads naturally to a subarrangement of the monomial reflection
arrangement of type \(G(3,3,4)\). 
Looking dually, this configuration is not contained in two skew lines, and has
\[
        f_0(S)=12,\qquad f_1(S)=58,\qquad f_2(S)=43.
\]
Therefore its dual arrangement has
\[
        n=12,\qquad \ell=58,\qquad p=43,
\]
and hence
\[
        p-\ell+n+2=-1.
\]
Thus the refined statement excluding only the two-skew-lines obstruction is
false.
\end{abstract}

\section{Introduction}

Let \(\A\) be an arrangement of \(n\) hyperplanes in the complex projective
space \(\PP^3_{\C}\).  We write \(\ell=\ell(\A)\) for the number of distinct
intersection lines of the arrangement, and \(p=p(\A)\) for the number of
distinct intersection points, where an intersection point means a point at which
at least three hyperplanes meet.  The numerical expression
\[
        p(\A)-\ell(\A)+n+2
\]
is the projective-arrangement counterpart of Purdy's point-line-plane problem in
three-dimensional incidence geometry.  In the dual point language it becomes
\[
        f_2(S)-f_1(S)+f_0(S)+2,
\]
where \(S\subset (\PP^3_{\C})^\vee\) is the dual point configuration and
\(f_i(S)\) denotes the number of \(i\)-dimensional projective flats spanned by
\(S\).  For more details please consult Purdy works \cite{Purdy1981, Purdy1986, PurdyMFO}, the survey of Erd\H{o}s--Purdy
\cite{ErdosPurdy1995}, and the later incidence-theoretic work of Lund \cite{Lund2018}.

A natural motivation for Purdy's inequality comes from the enumerative
geometry of flats spanned by finite point configurations. This is a classical topic, with well-known results like de Bruijn and Erdős theorem on the number of lines spanned by points in the plane. It is also a very active area of research, see e.g.~a very recent counterexample to Mason conjecture \cite{Matt}.
In a setting related to ours, Huh and Wang proved a
top-heavy theorem for realizable matroids: if \(E\) is a spanning finite
subset of a vector space, then, in the lattice of subspaces spanned by
subsets of \(E\), there are at least as many flats of complementary
higher dimension as flats of lower dimension in the appropriate range
\cite{Huh}.  In dimension three this philosophy predicts strong
constraints on the possible numbers of points, lines, and planes spanned
by a configuration.  Purdy's proposed inequality may be viewed as a
sharper, low-dimensional numerical refinement of this top-heavy principle:
for a point configuration \(S\subset \mathbb P^3\), or equivalently for
the dual arrangement of hyperplanes, it asks whether the defect
\[
        f_2(S)-f_1(S)+f_0(S)+2
\]
is always non-negative. A first tempting formulation is that every essential arrangement in \(\PP^3\)
should satisfy
\begin{equation}\label{eq:purdy-naive}
        p(\A)-\ell(\A)+n+2\geq 0.
\end{equation}
This is false.  The standard counterexample is a rank \(2+2\) product: take
\(a\) hyperplanes in one pencil and \(b\) hyperplanes in another pencil with
skew axis.  Dually, this is the configuration of \(a\) points on one line and
\(b\) points on a skew line.  This gives
\[
        n=a+b,
        \qquad
        \ell=ab+2,
        \qquad
        p=a+b,
\]
and therefore
\[
        p-\ell+n+2=2a+2b-ab,
\]
which is negative whenever \((a-2)(b-2)>4\).

This classical obstruction suggests the refined statement obtained by excluding
point configurations contained in the union of two skew lines.  The purpose of
this paper is to record that this refinement is also false.  The counterexample
is small, structured, and arises naturally from the reflection-arrangement
search. It is a \(\mu_3\)-balanced \(K_{2,2}\)-configuration inside the monomial reflection arrangement of type
\(G(3,3,4)\). More precisely, let \(\zeta\) be a primitive third root of unity and consider
\[
        S=\{[e_i-\zeta^a e_j]\mid i\in\{0,1\},\ j\in\{2,3\},\ a\in \Z/3\Z\}
        \subset \PP^3_{\C}.
\]
We prove that
\[
        (f_0(S),f_1(S),f_2(S))=(12,58,43).
\]
Consequently, the dual arrangement has
\[
        (n,\ell,p)=(12,58,43),
\]
and therefore
\[
        p-\ell+n+2=-1.
\]
Moreover, \(S\) is not contained in two skew lines.  Thus this example is a
counterexample not only to the naive essential version of Purdy's inequality,
but also to the refined version in which only the two-skew-lines obstruction is
excluded.

The organization is as follows.  Section~\ref{sec:duality} recalls the precise
dual dictionary between hyperplane arrangements and point configurations.
Section~\ref{sec:two-skew-lines} records the classical two-skew-lines
obstruction.  Section~\ref{sec:reflection-path} explains how the reflection
arrangement search leads to the new example: the full irreducible reflection
arrangements satisfy the inequality, but a natural gain-graphic subarrangement
of \(G(3,3,4)\) does not.  Section~\ref{sec:mu3-counterexample} gives the
complete count for the \(\mu_3\)-balanced \(K_{2,2}\)-configuration and proves
the counterexample.

\section{Dual formulation}
\label{sec:duality}

Let \(S\subset \PP^3_{\C}\) be a finite point set.  We put
\[
        f_0(S)=|S|,
\]
\[
        f_1(S)=\#\{\text{lines spanned by pairs of points of }S\},
\]
and
\[
        f_2(S)=\#\{\text{planes spanned by triples of non-collinear points of }S\}.
\]
We also write
\[
        \D(S)=f_2(S)-f_1(S)+f_0(S)+2
\]
and call \(\D(S)\) the \textbf{Purdy defect} of \(S\).

Let
\[
        \A=\{H_1,\ldots,H_n\}
\]
be an arrangement of hyperplanes in \(\PP^3_{\C}\).  Let
\[
        S=\{q_1,\ldots,q_n\}\subset (\PP^3_{\C})^\vee
\]
be the dual point configuration, where \(q_i\) corresponds to the hyperplane
\(H_i\).

\begin{proposition}[Dual dictionary]\label{prop:dual-dictionary}
With the notation above,
\[
        n=f_0(S),
        \qquad
        \ell(\A)=f_1(S),
        \qquad
        p(\A)=f_2(S).
\]
Consequently,
\[
        p(\A)-\ell(\A)+n+2=\D(S).
\]
Moreover, \(\A\) is essential if and only if \(S\) spans the whole dual space
\((\PP^3_{\C})^\vee\).
\end{proposition}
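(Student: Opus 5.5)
The plan is to prove the statement through projective duality, with the inclusion-reversing correspondence between linear subspaces of \(\PP^3_{\C}\) and of \((\PP^3_{\C})^\vee\) as the main tool. Write \(H_i=\{x: q_i(x)=0\}\) with \(q_i\) a nonzero linear form, viewed as a point of the dual space. For a subset \(I\subseteq\{1,\dots,n\}\), the intersection \(\bigcap_{i\in I}H_i\) is the projectivization of the annihilator of \(\operatorname{span}\{q_i: i\in I\}\). So \(\dim\bigcap_{i\in I}H_i=2-\dim\langle q_i: i\in I\rangle\), where both sides are projective dimensions. The map \(W\mapsto W^\perp\) is an inclusion-reversing bijection between subspaces of \(\C^4\) and of \((\C^4)^\vee\). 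The first step is to check that it restricts to a bijection between the intersection lattice of \(\A\) (the flats \(\bigcap_{i\in I}H_i\)) and the lattice of flats spanned by subsets of \(S\), sending projective dimension \(d\) to \(2-d\).

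The three equalities then follow term by term. First, \(n=f_0(S)\) requires the \(q_i\) to be pairwise distinct points, which holds because the \(H_i\) are distinct hyperplanes. Second, a line of \(\A\) is \(H_i\cap H_j\) for \(i\ne j\); it is dual to the line \(\overline{q_iq_j}\), and two such lines coincide if and only if the dual lines coincide. Hence \(\ell(\A)=f_1(S)\).

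The third equality is the one I expect to need care: one must match "a point where at least three hyperplanes meet" with "a plane spanned by three non-collinear points". Take a point \(P\) lying on \(H_i\) for \(i\in I_P\), with \(|I_P|\ge 3\). The hyperplane \(P^\perp\) of the dual space contains exactly the \(q_i\) with \(i\in I_P\). The issue is that a point meeting three hyperplanes could still lie on a line of \(\A\) whose dual points are all collinear, so that no plane is spanned. I would handle this by interpreting "intersection point" as a rank-3 flat of the intersection lattice, i.e.\ a point \(P\) with \(\bigcap_{i\in I_P}H_i=\{P\}\). This is the intended reading, since the \(p\) in the paper's two-skew-lines computation counts exactly these. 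Under this reading the \(q_i\), \(i\in I_P\), span \(P^\perp\), so they contain three non-collinear points and \(P^\perp\) is a spanned plane. Conversely, a plane \(\Pi\) spanned by non-collinear \(q_i,q_j,q_k\) gives the point \(\Pi^\perp=H_i\cap H_j\cap H_k\), which is a single point because the three forms are independent. These two maps are mutually inverse, so \(p(\A)=f_2(S)\). The identity \(p-\ell+n+2=\D(S)\) is then immediate from the definition of \(\D\).

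Finally, \(\A\) is essential if and only if \(\bigcap_i H_i=\emptyset\), i.e.\ the common kernel of the \(q_i\) in \(\C^4\) is \(0\). This holds if and only if the \(q_i\) span \((\C^4)^\vee\), that is, \(S\) spans \((\PP^3_{\C})^\vee\).
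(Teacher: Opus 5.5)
Your proposal is correct and takes the same route as the paper: projective duality through annihilators. It matches \(H_i\cap H_j\) with \(\langle q_i,q_j\rangle\), zero-dimensional flats with spanned planes, and essentiality with \(S\) spanning the dual space. Your reading of ``intersection point'' as a zero-dimensional flat of the intersection lattice is a genuine improvement that the paper's proof leaves implicit. Taken literally, the definition gives infinitely many such points whenever three hyperplanes share a line, which happens along the lines \(\{x_i=x_j=0\}\), \(i\in\{0,1\}\), \(j\in\{2,3\}\), in the paper's own example.
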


\begin{proof}
The equality \(n=f_0(S)\) is immediate.  The intersection line
\(H_i\cap H_j\) corresponds dually to the line \(\langle q_i,q_j\rangle\)
spanned by the two dual points.  Therefore distinct intersection lines of
\(\A\) are in bijection with spanned lines of \(S\), which gives
\(\ell(\A)=f_1(S)\).

Similarly, a point at which at least three hyperplanes meet corresponds dually
to a plane spanned by at least three non-collinear points of \(S\).  This gives
\(p(\A)=f_2(S)\).  The formula for the defect follows.

Finally, the arrangement is essential precisely when the defining linear forms
span the full dual vector space.  Projectively, this says that the points
\(q_i\) span \((\PP^3_{\C})^\vee\).
\end{proof}
Thus Purdy's inequality for arrangements is equivalent to
\[
        \D(S)=f_2(S)-f_1(S)+|S|+2\geq 0
\]
for the dual point configuration.

\section{The two-skew-lines obstruction}
\label{sec:two-skew-lines}

We recall the standard obstruction.  Let
\[
        S=A\cup B\subset \PP^3,
\]
where \(A\subset L\), \(B\subset M\), and \(L,M\subset \PP^3\) are two skew
lines.  Write
\[
        |A|=a,
        \qquad
        |B|=b,
\]
and assume \(a,b\geq 2\).  Then \(S\) spans \(\PP^3\).

\begin{proposition}\label{prop:two-skew-lines}
With the notation above,
\[
        f_1(S)=ab+2,
        \qquad
        f_2(S)=a+b,
\]
and therefore
\[
        \D(S)=2a+2b-ab.
\]
In particular, \(\D(S)<0\) if and only if
\[
        (a-2)(b-2)>4.
\]
\end{proposition}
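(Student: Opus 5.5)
The plan is a direct count of spanned lines and spanned planes, organized by how many points each flat takes from \(A\) and from \(B\). The one geometric input is that \(L\) and \(M\) are skew, so \(L\cap M=\emptyset\) and \(L,M\) together span \(\PP^3\).

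\emph{Lines.} A spanned line either contains two points of \(A\), in which case it is \(L\); or two points of \(B\), in which case it is \(M\); or it is \(\langle x,y\rangle\) with \(x\in A\), \(y\in B\). The lines of the third kind need two checks.
\begin{itemize}
\item They are pairwise distinct. Such a line meets \(L\) only in \(x\): if it met \(L\) in a second point it would equal \(L\) and so could not contain \(y\in M\setminus L\). Likewise it meets \(M\) only in \(y\). Hence the line determines the pair \((x,y)\).
\item None of them equals \(L\) or \(M\), by the same reasoning.
\end{itemize}
This gives \(f_1(S)=ab+2\), using \(a,b\geq 2\) so that \(L\) and \(M\) are actually spanned.

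\emph{Planes.} Any plane \(P\) spanned by non-collinear points of \(S\) contains at least three points of \(S\). A plane cannot contain both \(L\) and \(M\), since skew lines are not coplanar. A line not contained in \(P\) meets it in exactly one point. So \(|P\cap A|\leq 1\) or \(|P\cap B|\leq 1\).
\begin{itemize}
\item If \(|P\cap A|\leq 1\), then \(P\) must contain at least two points of \(B\), hence \(P\supset M\), and \(P\) must contain exactly one point \(x\in A\). So \(P=\langle M,x\rangle\).
\item Conversely, for each \(x\in A\) the plane \(\langle M,x\rangle\) is spanned by non-collinear points of \(S\): two points of \(B\) together with \(x\notin M\).
\item Distinct \(x,x'\in A\) give distinct planes. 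Otherwise that plane would contain \(M\) and also \(L=\langle x,x'\rangle\), which is impossible.
\item The symmetric case \(|P\cap B|\leq 1\) gives the \(b\) planes \(\langle L,y\rangle\), \(y\in B\).
\item No plane \(\langle M,x\rangle\) equals a plane \(\langle L,y\rangle\), since that common plane would contain both \(L\) and \(M\).
\end{itemize}
Hence \(f_2(S)=a+b\).

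\emph{Conclusion.} Substituting into the definition gives
\[
\D(S)=(a+b)-(ab+2)+(a+b)+2=2a+2b-ab.
\]
Since \(2a+2b-ab=4-(a-2)(b-2)\), we get \(\D(S)<0\) if and only if \((a-2)(b-2)>4\).

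The only step needing care is the plane count. The key is the dichotomy that a plane contains at most one of the two skew lines and meets the other in at most one point; once that is established, the rest is bookkeeping.
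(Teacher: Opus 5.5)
Your proposal is correct and follows the paper's approach: both proofs enumerate the spanned lines as \(L\), \(M\) and the \(ab\) joins \(\langle x,y\rangle\), and the spanned planes as \(\langle M,x\rangle\) and \(\langle L,y\rangle\). Your version also supplies the distinctness and exhaustiveness checks, coming from \(L\cap M=\emptyset\) and the dichotomy that a plane cannot contain both skew lines, which the paper leaves implicit; the only unstated step is that ``exactly one point of \(A\)'' follows from the non-collinearity of the spanning points.
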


\begin{proof}
The spanned lines are the two lines \(L\) and \(M\), together with the \(ab\)
joining lines
\[
        \langle a_i,b_j\rangle,
        \qquad
        a_i\in A,\ b_j\in B.
\]
Hence \(f_1(S)=ab+2\).

The spanned planes are exactly the planes
\[
        \langle L,b_j\rangle,
        \qquad b_j\in B,
\]
and
\[
        \langle M,a_i\rangle,
        \qquad a_i\in A.
\]
Thus \(f_2(S)=a+b\).  Since \(|S|=a+b\), we get
\[
        \D(S)=(a+b)-(ab+2)+(a+b)+2=2a+2b-ab.
\]
The final assertion follows by rewriting \(ab>2a+2b\) as
\((a-2)(b-2)>4\).
\end{proof}

Dually, Proposition~\ref{prop:two-skew-lines} gives an essential arrangement of
planes in \(\PP^3\) violating \eqref{eq:purdy-naive}.  In coordinates
\([x:y:z:w]\), this arrangement has the form
\[
        x-\lambda_i y=0,
        \qquad i=1,\ldots,a,
\]
together with
\[
        z-\mu_j w=0,
        \qquad j=1,\ldots,b,
\]
where all \(\lambda_i\)'s are distinct and all \(\mu_j\)'s are distinct.  It is
an essential rank \(2+2\) product arrangement.  For example \((a,b)=(5,5)\)
gives
\[
        n=10,
        \qquad
        \ell=27,
        \qquad
        p=10,
        \qquad
        p-\ell+n+2=-5.
\]
This is the classical obstruction in Purdy's problem.

\section{Reflection arrangements as the path to the counterexample}
\label{sec:reflection-path}

The first systematic place to search for examples is the class of reflection
arrangements.  General references for reflection arrangements and complex
reflection groups are Orlik--Terao \cite{OrlikTerao1992} and Shephard--Todd
\cite{ShephardTodd1954}.  We record the outcome of the search because it is
instructive: the full irreducible rank-four reflection arrangements satisfy
Purdy's inequality, while the counterexample appears as a natural subarrangement
of an imprimitive reflection arrangement. We refer to Hunt's PhD thesis \cite[Chapter 2]{Hunt} for the numerical data describing reflection arrangements listed below.

\subsection{Irreducible Coxeter arrangements}

The irreducible finite Coxeter arrangements of rank four are
\[
        A_4,\quad D_4,\quad B_4,\quad F_4,\quad H_4.
\]
Their values are as follows:
\begin{center}
\begin{tabular}{ccccc}
\toprule
Type & \(n\) & \(\ell\) & \(p\) & \(p-\ell+n+2\) \\
\midrule
\(A_4\) & 10 & 25  & 15   & 2 \\
\(D_4\) & 12 & 34  & 24   & 4 \\
\(B_4\) & 16 & 58  & 40   & 0 \\
\(F_4\) & 24 & 122 & 120  & 24 \\
\(H_4\) & 60 & 722 & 1320 & 660 \\
\bottomrule
\end{tabular}
\end{center}
Thus no irreducible rank-four Coxeter arrangement violates the inequality.  The
boundary case is \(B_4\), where equality holds.

\subsection{Irreducible monomial complex reflection arrangements}

For \(G(m,p,4)\) with \(p<m\), the reflection arrangement is the full monomial
arrangement
\[
        \A(m,1,4):\quad
        z_1z_2z_3z_4
        \prod_{1\leq i<j\leq 4}(z_i^m-z_j^m)=0.
\]
For this arrangement,
\[
        n=6m+4,
        \qquad
        \ell=7m^2+12m+6,
        \qquad
        p=m^3+4m^2+6m+4.
\]
Therefore
\[
        p-\ell+n+2=(m-2)^2(m+1)\geq 0.
\]
Equality occurs at \(m=2\), namely the Coxeter arrangement of type \(B_4\).

For \(G(m,m,4)\), the reflection arrangement is
\[
        \A(m,m,4):\quad
        \prod_{1\leq i<j\leq 4}(z_i^m-z_j^m)=0.
\]
Here
\[
        n=6m,
        \qquad
        \ell=7m^2+6,
        \qquad
        p=m^3+6m+4,
\]
and hence
\[
        p-\ell+n+2=m(m-3)(m-4).
\]
For \(m\geq 2\), this is nonnegative, with equality at \(m=3\) and \(m=4\).
Thus the full irreducible imprimitive arrangements do not give a counterexample.

\subsection{The gain-graphic subarrangement suggested by \texorpdfstring{\(G(3,3,4)\)}{G(3,3,4)}}

The equality case \(\A(3,3,4)\) is especially suggestive.  Its defining
polynomial is
\[
        \prod_{1\leq i<j\leq 4}(z_i^3-z_j^3),
\]
so its hyperplanes are
\[
        z_i-\zeta^a z_j=0,
        \qquad
        1\leq i<j\leq 4,
        \quad
        a\in \Z/3\Z.
\]
The full arrangement has defect zero.  The counterexample appears after keeping
only the four edges of the complete bipartite graph
\[
        K_{2,2}:\quad \{0,1\}\times \{2,3\}.
\]
Thus we consider the subarrangement
\[
        z_i-\zeta^a z_j=0,
        \qquad
        i\in\{0,1\},
        \quad
        j\in\{2,3\},
        \quad
        a\in\Z/3\Z.
\]
This is a \(\mu_3\)-gain-graphic arrangement supported on a four-cycle.  The
balanced-cycle relation in this four-cycle is exactly the mechanism that reduces
the number of spanned planes and produces a negative Purdy defect.  For the
general language of biased and gain graphs, see Zaslavsky \cite{Zaslavsky1989}.

\section{The \texorpdfstring{\(\mu_3\)}{mu3}-balanced \texorpdfstring{\(K_{2,2}\)}{K22} counterexample}
\label{sec:mu3-counterexample}

Let \(\zeta\) be a primitive third root of unity.  In \(\PP^3_{\C}\), with
homogeneous coordinates corresponding to the standard basis \(e_0,e_1,e_2,e_3\), define
\[
        p_{ij}^a=(e_i-\zeta^a e_j ),
        \qquad
        i\in\{0,1\},
        \quad
        j\in\{2,3\},
        \quad \text{and} \quad
        a\in\Z/3\Z.
\]
Let
\[
        S=\{p_{ij}^a\mid i\in\{0,1\},\ j\in\{2,3\},\ a\in\Z/3\Z\}.
\]
Then \(|S|=12\).  The set is supported on four coordinate lines
\[
        L_{02},\quad L_{03},\quad L_{12},\quad L_{13},
\]
where
\[
        L_{ij}=\PP(\langle e_i,e_j\rangle).
\]
Each \(L_{ij}\) contains the three points
\[
        p_{ij}^0,
        \quad
        p_{ij}^1,
        \quad
        p_{ij}^2.
\]

\begin{lemma}\label{lem:mu3-spans}
The set \(S\) spans \(\PP^3_{\C}\), and it is not contained in the union of two
skew lines.
\end{lemma}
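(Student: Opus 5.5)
The spanning claim is a direct linear-algebra check, and the non-containment claim follows from a counting argument on lines.

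\emph{Spanning.} It suffices to exhibit four points of \(S\) whose representative vectors are linearly independent. I would take \(p_{02}^0=e_0-e_2\), \(p_{03}^0=e_0-e_3\), \(p_{12}^0=e_1-e_2\) and \(p_{02}^1=e_0-\zeta e_2\). The first and last give \((1-\zeta)e_2\), and \(1-\zeta\neq 0\), so \(e_2\) lies in the span. Then the first vector gives \(e_0\), the second gives \(e_3\), and the third gives \(e_1\). Hence the span is all of \(\C^4\), and \(S\) spans \(\PP^3_{\C}\).

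\emph{Not contained in two skew lines.} Suppose \(S\subset L\cup M\) with \(L,M\) skew lines. Since \(|S|=12\), one of the lines, say \(L\), contains at least six points of \(S\). I would first show that every line of \(\PP^3\) contains at most three points of \(S\); this contradicts the six points on \(L\). So let \(N\) be a line containing two points \(p,q\in S\), and split into cases.

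If \(p\) and \(q\) lie on the same coordinate line \(L_{ij}\), then \(N=L_{ij}\). The points of \(S\) on \(L_{ij}\) are exactly \(p_{ij}^0,p_{ij}^1,p_{ij}^2\), because every point of \(S\) has support on exactly two coordinates, \(\{i,j\}\) being one of the four allowed pairs. So \(N\) carries exactly three points of \(S\).

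If \(p\) and \(q\) lie on different coordinate lines, then \(N\) lies in the coordinate subspace spanned by the union of their supports. When the supports share an index, for example \(p=e_0-\zeta^a e_2\) and \(q=e_0-\zeta^b e_3\), this subspace is the plane \(\langle e_0,e_2,e_3\rangle\). That plane contains only the six points of \(S\) on \(L_{02}\cup L_{03}\), and \(N\) meets each of \(L_{02}\) and \(L_{03}\) in only one point. So \(N\) contains exactly two points of \(S\). When the supports are disjoint, as for \(L_{02}\) and \(L_{13}\), the line \(N\) meets each of the two corresponding coordinate lines in only one point. Any further point of \(S\) on \(N\) would lie on some \(L_{ij}\), and its support would have to be contained in \(\{0,1,2,3\}\) while keeping the vector in \(\operatorname{span}(p,q)\). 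A short coordinate computation, comparing supports of linear combinations \(\alpha p+\beta q\) with \(\alpha\beta\neq 0\) (these have support of size four), shows this is impossible.

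Thus no line contains more than three points of \(S\), which gives the contradiction. The same bound also shows at once that \(S\) cannot lie in any union of two lines, skew or not, since that would force at least six points onto one line. The only step with any real content is the case analysis of which points of \(S\) can be collinear, and even that reduces to a support argument.
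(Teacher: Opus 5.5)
Your proof is correct. The spanning part uses the same four vectors as the paper, but the non-containment part argues in the opposite direction.

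The paper argues from below. The four coordinate lines \(L_{02},L_{03},L_{12},L_{13}\) visibly each contain three points of \(S\). Any line other than \(L\) or \(M\) meets \(L\cup M\) in at most two points, so a set inside \(L\cup M\) can have at most two lines carrying three of its points. This needs no analysis of other possible collinearities.

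You argue from above. Pigeonhole forces some line to carry six points, and you rule this out by showing that every line carries at most three points of \(S\). That bound requires the full support case analysis of which pairs can have a third point on their join, which is essentially the collinearity classification the paper only carries out in Lemma~\ref{lem:mu3-lines}.

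Your route costs more work but gives more. The three-points-per-line bound shows \(S\) lies in no union of three lines, since \(3\cdot 3<12\). The same case analysis also feeds directly into \(f_1(S)=58\). Extending to two non-skew lines is not special to your approach, though: the paper's argument never uses skewness either.
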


\begin{proof}
The vectors
\[
        e_0-e_2,
        \qquad
        e_0-\zeta e_2,
        \qquad
        e_0-e_3,
        \qquad
        e_1-e_2
\]
span \(\C^4\), thus \(S\) spans \(\PP^3_{\C}\).

The set \(S\) contains four distinct lines \(L_{02},L_{03},L_{12},L_{13}\), each
containing three points of \(S\).  If a finite set were contained in the union of
two skew lines, then any line containing three of its points would have to be one
of those two lines.  Here there are four such lines.  Hence \(S\) is not
contained in the union of two skew lines.
\end{proof}

\begin{lemma}\label{lem:mu3-lines}
The number of lines spanned by \(S\) is
\[
        f_1(S)=58.
\]
\end{lemma}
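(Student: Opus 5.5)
The plan is to count pairs and correct for collinear triples. There are \(\binom{12}{2}=66\) pairs of points of \(S\), and the lines spanned by \(S\) correspond to these pairs after identifying pairs that span the same line. A line containing exactly \(k\) points of \(S\) is counted \(\binom{k}{2}\) times among the pairs. Each of the four lines \(L_{02},L_{03},L_{12},L_{13}\) contains exactly three points of \(S\), so it is counted \(3\) times instead of once. If we show that every other spanned line contains exactly two points of \(S\), then
\[
        f_1(S)=66-4\cdot(3-1)=58 .
\]

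The key step is to show that no three points of \(S\) are collinear unless they lie on one of the \(L_{ij}\). First, \(L_{ij}\cap S\) is exactly \(\{p_{ij}^0,p_{ij}^1,p_{ij}^2\}\). Indeed, a point of \(S\) on \(L_{ij}\) has support contained in \(\{i,j\}\), and every point of \(S\) has support exactly \(\{i',j'\}\) with \(i'\in\{0,1\}\) and \(j'\in\{2,3\}\). It remains to treat a pair \(p=p_{ij}^a\), \(q=p_{i'j'}^b\) lying on different coordinate lines, so \((i,j)\neq(i',j')\). We show that the line \(\langle p,q\rangle\) contains no further point of \(S\). Every point of \(S\) has exactly one nonzero coordinate among indices \(0,1\) and exactly one among \(2,3\). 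We check the combinations \(\alpha p+\beta q\) with \(\alpha\beta\neq 0\) in three cases.
\begin{enumerate}[label=(\roman*)]
\item If \(i\neq i'\) and \(j\neq j'\), the supports are disjoint. Then \(\alpha p+\beta q\) has four nonzero coordinates, so it is not in \(S\).
\item If \(i=i'\) and \(j\neq j'\), the coordinates at \(j\) and \(j'\) are \(-\alpha\zeta^a\) and \(-\beta\zeta^b\). Both are nonzero, so the point has two nonzero coordinates among \(\{2,3\}\) and is not in \(S\).
\item If \(i\neq i'\) and \(j=j'\), the coordinates at \(i\) and \(i'\) are \(\alpha\) and \(\beta\). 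Both are nonzero, so the point is not in \(S\).
\end{enumerate}
Hence such a line meets \(S\) in exactly \(\{p,q\}\).

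Consequently the spanned lines are the four lines \(L_{ij}\), together with the \(\binom{12}{2}-4\cdot 3=54\) lines joining points on different coordinate lines. These \(54\) lines are pairwise distinct, since each contains exactly two points of \(S\), and each differs from every \(L_{ij}\). This gives \(f_1(S)=4+54=58\). I do not expect a real obstacle. The only care needed is the case split by shared indices, and the support argument handles all cases uniformly.
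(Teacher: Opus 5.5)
Your proof is correct and takes essentially the same approach as the paper: you count the \(\binom{12}{2}=66\) pairs, subtract the overcount \(4\cdot(3-1)\) coming from the four three-point lines \(L_{ij}\), and use coordinate supports to rule out any other collinear triple. Your three-case support analysis also spells out the shared-index cases, which the paper only sketches.
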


\begin{proof}
There are \(\binom{12}{2}=66\) unordered pairs of points.  The only collinear
triples in \(S\) are the four triples lying on
\[
        L_{02},\quad L_{03},\quad L_{12},\quad L_{13}.
\]
Indeed, if two points lie on the same \(L_{ij}\), they span that line.  If two
points lie on different coordinate lines, then a direct inspection of supports
shows that the line joining them contains no third point of \(S\).  For example,
a line joining a point on \(L_{02}\) to a point on \(L_{13}\) has a general
representative
\[
        \lambda(e_0-\zeta^a e_2)+\mu(e_1-\zeta^d e_3),
\]
and it cannot be proportional to a vector of the form \(e_i-\zeta^r e_j\) on one
of the other coordinate lines unless \(\lambda=0\) or \(\mu=0\), which gives one
of the two original points.  The cases in which the two coordinate lines share
one index are even simpler, since the join is contained in a coordinate plane
and meets each of the relevant coordinate lines only at the chosen point.

On each of the four lines \(L_{ij}\), three pairs of points give only one
spanned line.  Thus each such line reduces the naive count by
\[
        \binom{3}{2}-1=2.
\]
Therefore
\[
        f_1(S)=\binom{12}{2}-4\left(\binom{3}{2}-1\right)=66-8=58.
\]
\end{proof}

\begin{lemma}\label{lem:mu3-planes}
The number of planes spanned by \(S\) is
\[
        f_2(S)=43.
\]
\end{lemma}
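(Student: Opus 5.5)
The plan is to classify the planes spanned by \(S\) by how many of the four lines \(L_{02},L_{03},L_{12},L_{13}\) they contain, and to count each class using linear forms. These four lines form a skew quadrilateral. Adjacent lines meet in coordinate points: \(L_{02}\cap L_{03}=e_0\), \(L_{02}\cap L_{12}=e_2\), \(L_{03}\cap L_{13}=e_3\), \(L_{12}\cap L_{13}=e_1\). Opposite lines (\(L_{02},L_{13}\) and \(L_{03},L_{12}\)) are skew. None of the points \(e_k\) lies in \(S\). I describe a plane by its linear form \(c=(c_0,c_1,c_2,c_3)\). Then \(p_{ij}^a\) lies on the plane if and only if \(c_i=\zeta^a c_j\). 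Every spanned plane contains three non-collinear points of \(S\), and every point of \(S\) lies on one of the four lines.

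\textbf{Step 1: planes containing two of the lines.} Two adjacent lines span a coordinate plane: \(\{x_1=0\},\{x_3=0\},\{x_2=0\},\{x_0=0\}\). This gives 4 planes, each containing 6 points. Two skew lines span all of \(\PP^3\), so no plane contains two opposite lines. A plane containing a line \(L\) and a point of an adjacent line already contains that adjacent line, so it is one of these 4 planes.

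\textbf{Step 2: planes containing exactly one line.} Such a plane is \(\langle L_{ij},q\rangle\) with \(q\) a point on the line opposite to \(L_{ij}\). For example, \(\langle e_0,e_2,e_1-\zeta^d e_3\rangle\) is the plane \(x_3+\zeta^d x_1=0\). I check directly that it contains no point of \(L_{03}\) or \(L_{12}\), and no other point of \(L_{13}\). The three choices of \(q\) therefore give 3 distinct planes for each line, hence \(4\cdot 3=12\) planes. None of them coincides with a plane from Step 1.

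\textbf{Step 3: planes containing none of the lines.} Such a plane meets each \(L_{ij}\) in exactly one point. Being spanned, it must contain points of \(S\) on at least three distinct lines. First I show every \(c_k\neq 0\). If \(c_0=0\), the plane meets both \(L_{02}\) and \(L_{03}\) at \(e_0\notin S\). It could then contain at most two points of \(S\), one on \(L_{12}\) and one on \(L_{13}\), which is too few. The other coordinates are handled symmetrically.

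Now normalize \(c_2=1\). Any three of the four conditions
\[
c_0=\zeta^a c_2,\quad c_0=\zeta^b c_3,\quad c_1=\zeta^{c} c_2,\quad c_1=\zeta^{d} c_3
\]
force all ratios \(c_i/c_j\) to lie in \(\mu_3\). The fourth condition then holds automatically, for a suitable exponent; this is the balanced-cycle relation. Consequently every such plane contains exactly four points of \(S\), one on each line. These planes correspond bijectively to the choices \((c_0,c_1,c_3)\in\mu_3^3\), giving \(27\) planes.

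Adding up gives \(f_2(S)=4+12+27=43\). The main obstacle is making the case split in Step 3 airtight. One must show that no plane is double counted across the classes, and that the degenerate cases (some \(c_k=0\), or a plane meeting a line at a non-\(S\) point) contribute nothing new. I would handle this with the linear-form description throughout, since it turns each incidence into a single equation \(c_i=\zeta^a c_j\).
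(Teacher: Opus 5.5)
Your proof is correct and uses the same decomposition as the paper: the $4$ coordinate face planes, the $4\cdot 3=12$ planes spanned by one line $L_{ij}$ and a point of the opposite line, and the $27$ balanced planes meeting each line in a point of $S$. The paper tests coplanarity of four points with a determinant and then only asserts that the three types are disjoint and exhaustive; your classification by the number of lines $L_{ij}$ contained in the plane, together with the observation that any three of the incidence conditions $c_i=\zeta^{a}c_j$ force the fourth, actually proves that assertion.
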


\begin{proof}
There are three types of spanned planes.

First, there are the four coordinate face planes
\[
        \PP(\langle e_0,e_2,e_3\rangle),
        \quad
        \PP(\langle e_1,e_2,e_3\rangle),
\]
\[
        \PP(\langle e_0,e_1,e_2\rangle),
        \quad
        \PP(\langle e_0,e_1,e_3\rangle).
\]
Each contains two of the four lines \(L_{ij}\).  This gives \(4\) planes.

Second, take one of the four lines \(L_{ij}\) and one point on the opposite
line.  The opposite pairs are
\[
        (L_{02},L_{13})
        \qquad\text{and}\qquad
        (L_{03},L_{12}).
\]
For each of the four lines there are three choices of a point on the opposite
line.  This gives
\[
        4\cdot 3=12
\]
planes.

Third, consider planes containing one point from each of the four lines.  Take
\[
        p_{02}^a,
        \quad
        p_{03}^b,
        \quad
        p_{12}^c,
        \quad
        p_{13}^d.
\]
These four points are coplanar if and only if
\[
        a+d\equiv b+c \pmod 3.
\]
Indeed, using the representatives
\[
        p_{02}^a=(1,0,-\zeta^a,0),
        \qquad
        p_{03}^b=(1,0,0,-\zeta^b),
\]
\[
        p_{12}^c=(0,1,-\zeta^c,0),
        \qquad
        p_{13}^d=(0,1,0,-\zeta^d),
\]
we compute
\[
\det
\begin{pmatrix}
1&0&-\zeta^a&0\\
1&0&0&-\zeta^b\\
0&1&-\zeta^c&0\\
0&1&0&-\zeta^d
\end{pmatrix}
=
\zeta^{a+d}-\zeta^{b+c}.
\]
Thus the determinant vanishes precisely when
\[
        \zeta^{a+d}=\zeta^{b+c},
\]
or equivalently
\[
        a+d\equiv b+c\pmod 3.
\]
For any choice of \(a,b,c\in\Z/3\Z\), there is a unique
\[
        d=b+c-a
\]
satisfying this congruence.  Hence there are
\[
        3^3=27
\]
planes of this third type.  These planes are distinct: after normalizing the
coefficient of \(X_2\) to be \(1\), the plane has equation
\[
        \zeta^a X_0+\zeta^c X_1+X_2+\zeta^{a-b}X_3=0,
\]
which determines the triple \((a,b,c)\).

The three types above are disjoint and exhaust all spanned planes.  Therefore
\[
        f_2(S)=4+12+27=43.
\]
\end{proof}
Finally we can sum up our discussions above into the main theorem.
\begin{theorem}[The \(\mu_3\)-balanced counterexample]\label{thm:mu3-counterexample}
Let \(\zeta\) be a primitive third root of unity and let
\[
        \A_{\mu_3}
        =
        \{x_i-\zeta^a x_j=0
        \mid
        i\in\{0,1\},\ j\in\{2,3\},\ a\in\Z/3\Z\}
        \subset \PP^3_{\C}.
\]
Then \(\A_{\mu_3}\) is an essential arrangement of \(12\) hyperplanes.  It has
\[
        \ell(\A_{\mu_3})=58,
        \qquad
        p(\A_{\mu_3})=43.
\]
Consequently,
\[
        p(\A_{\mu_3})-\ell(\A_{\mu_3})+12+2=-1.
\]
In particular, Purdy's inequality fails for \(\A_{\mu_3}\).
\end{theorem}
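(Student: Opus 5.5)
The theorem follows by assembling the earlier results through the dual dictionary, so the plan is mostly bookkeeping. First I identify the dual configuration. The hyperplane \(x_i-\zeta^a x_j=0\) has defining linear form with coefficient vector \(e_i-\zeta^a e_j\). Hence the dual point configuration of \(\A_{\mu_3}\) in \((\PP^3_{\C})^\vee\) is exactly the set \(S\) of Section~\ref{sec:mu3-counterexample}, after identifying \((\PP^3_{\C})^\vee\) with \(\PP^3_{\C}\) through the standard dual basis. The twelve forms are pairwise non-proportional, since \(|S|=12\). So \(\A_{\mu_3}\) consists of \(12\) distinct hyperplanes and \(n=f_0(S)=12\).

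Next I obtain each claim of the theorem from an earlier result.
\begin{enumerate}[label=(\roman*)]
\item \emph{Essentiality.} By Lemma~\ref{lem:mu3-spans}, \(S\) spans the whole space. By the last assertion of Proposition~\ref{prop:dual-dictionary}, \(\A_{\mu_3}\) is therefore essential.
\item \emph{The counts.} Proposition~\ref{prop:dual-dictionary} gives \(\ell(\A_{\mu_3})=f_1(S)\) and \(p(\A_{\mu_3})=f_2(S)\). Lemma~\ref{lem:mu3-lines} gives \(f_1(S)=58\), and Lemma~\ref{lem:mu3-planes} gives \(f_2(S)=43\).
\item \emph{The defect.} Substituting, \(p-\ell+n+2=\D(S)=43-58+12+2=-1<0\), so Purdy's inequality \eqref{eq:purdy-naive} fails.
\end{enumerate}
Lemma~\ref{lem:mu3-spans} also shows that \(S\) is not contained in two skew lines. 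This is what makes the example a counterexample to the refined statement and not just another instance of Proposition~\ref{prop:two-skew-lines}.

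The only step that needs care is the one the theorem inherits from Lemma~\ref{lem:mu3-planes}: the assertion that the three plane types are disjoint and \emph{exhaust} all spanned planes. If I had to check this, I would split by how many of the four lines \(L_{ij}\) a candidate plane \(P\) meets in at least two points. A plane containing two of the lines contains two lines sharing an index, since opposite lines are skew. Such a plane is a coordinate face. A plane containing exactly one line \(L_{ij}\) meets each of the other three lines in one point. Its points outside \(L_{ij}\) then come only from the opposite line, because the two adjacent lines meet \(L_{ij}\) at a coordinate vertex, which is not in \(S\). This gives type two. A plane containing no \(L_{ij}\) meets each line in at most one point, so it has at most four points of \(S\). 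If it has exactly three, those three lie on three distinct lines, and I would check that the plane they span meets the fourth line at a point of \(S\). This check is the determinant computation with one parameter free: for any \(a,b,c\) there is a unique \(d\). So the plane is of type three. I would also verify that the three types are disjoint by looking at which points of \(S\) each type contains.

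Finally, I would note that \(\ell\) counts only distinct intersection lines and \(p\) counts only points of multiplicity at least three, matching \(f_1\) and \(f_2\) exactly as the dictionary requires.
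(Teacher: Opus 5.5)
Your proposal is correct and follows the paper's proof: identify the dual configuration with \(S\), then combine Proposition~\ref{prop:dual-dictionary} with Lemmas~\ref{lem:mu3-spans}, \ref{lem:mu3-lines} and \ref{lem:mu3-planes}. Your extra sketch of why the three plane types exhaust all spanned planes (splitting by how many lines \(L_{ij}\) a plane contains) is also sound and fills in a step the paper only asserts; note that when the three chosen points lie on a different triple of lines, the relation \(a+d\equiv b+c\) still determines the missing index, so the argument is not limited to solving for \(d\).
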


\begin{proof}
The dual point configuration of \(\A_{\mu_3}\) is exactly the set \(S\) above.
By Proposition~\ref{prop:dual-dictionary},
\[
        n=f_0(S),
        \qquad
        \ell(\A_{\mu_3})=f_1(S),
        \qquad
        p(\A_{\mu_3})=f_2(S).
\]
By Lemmas~\ref{lem:mu3-spans}, \ref{lem:mu3-lines}, and
\ref{lem:mu3-planes}, we have
\[
        f_0(S)=12,
        \qquad
        f_1(S)=58,
        \qquad
        f_2(S)=43.
\]
Thus
\[
        p(\A_{\mu_3})-\ell(\A_{\mu_3})+12+2=43-58+12+2=-1.
\]
The arrangement is essential because its dual point configuration spans
\(\PP^3_{\C}\).
\end{proof}

\begin{remark}
The mechanism is the balanced four-cycle relation
\[
        a+d\equiv b+c\pmod 3.
\]
It says that, once three points on three edges of the \(K_{2,2}\)-configuration
are chosen, there is a unique fourth point on the remaining edge such that the
four points are coplanar.  This creates many four-point planes and reduces the
number of distinct spanned planes.
\end{remark}

\subsection{Why the third root of unity is critical for the counterexample}

The same construction can be made with \(\mu_m\).  Let \(\omega\) be a primitive
\(m\)-th root of unity and set
\[
        S_m=\{[e_i-\omega^a e_j]\mid i\in\{0,1\},\ j\in\{2,3\},\ a\in\Z/m\Z\}.
\]
Then
\[
        f_0(S_m)=4m.
\]
The four coordinate lines each contain \(m\) points, and there are no other
collinear triples.  Hence
\[
        f_1(S_m)=\binom{4m}{2}-4\left(\binom m2-1\right)=6m^2+4.
\]
The planes are counted as above: four coordinate face planes, \(4m\) planes
containing one coordinate line and one point on the opposite coordinate line,
and \(m^3\) balanced four-point planes.  Hence
\[
        f_2(S_m)=m^3+4m+4.
\]
Therefore
\[
        \D(S_m)=m^3-6m^2+8m+2.
\]
For \(m=2,3,4\) one obtains respectively
\[
        2,
        \quad
        -1,
        \quad
        2.
\]
For \(m\geq 5\), the expression is positive.  Thus the case \(m=3\) is the
unique critical root-of-unity case in this family.
\section*{Acknowledgments}
Mateusz Micha{\l}ek is supported by the DFG Grant \textbf{580118961}.
Piotr Pokora is supported by the National Science Centre (Poland) Sonata Bis Grant 
\textbf{2023/50/E/ST1/00025}. For the purpose of Open Access, the author has applied a CC-BY public copyright license to any Author Accepted Manuscript (AAM) version arising from this submission. We acknowledge the use of \texttt{Chat GPT} in the generation of computer programs that helped us to conduct symbolic computations for configurations of points. 

\bigskip

Mateusz Michałek \\
\noindent
Department of Mathematics and Statistics,
University of Konstanz, Post Box 192, D-78457
Konstanz, Germany\\
Email: \url{mateusz.michalek@uni-konstanz.de}

\bigskip
Piotr Pokora\\
\noindent
Department of Mathematics,
University of the National Education Commission Krakow,
Podchor\c a\.zych 2,
PL-30-084 Krak\'ow, Poland. \\
Email: \url{piotr.pokora@uken.krakow.pl}

\begin{thebibliography}{99}

\bibitem{ErdosPurdy1995}
P. Erd\H{o}s and G. B. Purdy, \emph{Extremal problems in combinatorial geometry},
in: R. Graham, M. Gr\"otschel and L. Lov\'asz (eds.),
\emph{Handbook of Combinatorics}, Vol. 1,
Elsevier, Amsterdam, 1995, 809--873.

\bibitem{Huh}
J. Huh and B. Wang, Enumeration of points, lines, planes, etc. \textit{Acta Math.} \textbf{218(2)}: 297--317 (2017).

\bibitem{Hunt}
B. Hunt, Coverings and ball quotients with special emphasis on the 3-dimensional case. \textit{Bonn. Math. Schr.} \textbf{174}: 87 p. (1986).

\bibitem{Matt}
M. Larson, Counterexamples to two conjectures about matroids. \textbf{arXiv:2607.02208} (2026).

\bibitem{Lund2018}
B. Lund, Essential dimension and the flats spanned by a point set. \textit{Combinatorica} \textbf{38}: 1149--1174 (2018).

\bibitem{OrlikTerao1992}
P. Orlik and H. Terao, \emph{Arrangements of Hyperplanes}, Grundlehren der mathematischen Wissenschaften, Vol. 300,
Springer-Verlag, Berlin, 1992.


\bibitem{Purdy1981}
G. B. Purdy, A proof of a consequence of Dirac’s conjecture. 
\textit{Geometriae Dedicata} \textbf{10}: 317 -- 321 (1981).


\bibitem{Purdy1986}
G. B. Purdy, Two results about points, lines and planes.
\textit{Discrete Mathematics} \textbf{60}: 215--218 (1986).

\bibitem{PurdyMFO}
G. B. Purdy, When is the number of hyperplanes determined by $n$ points in $d$-space
at least the number of $(d-2)$-dimensional flats? Abstract of the talk in MFO Report 44/2011, 2472--2474 (2011), \texttt{DOI:10.4171/OWR/2011/44}.

\bibitem{ShephardTodd1954}
G. C. Shephard and J. A. Todd, Finite unitary reflection groups.
\textit{Canadian Journal of Mathematics} \textbf{6}: 274 -- 304 (1954).

\bibitem{Zaslavsky1989}
T. Zaslavsky, Biased graphs. I. Bias, balance, and gains.
\textit{Journal of Combinatorial Theory, Series B} \textbf{47}: 32 -- 52 (1989).

\end{thebibliography}
\end{document}